\newlength\mylen
\newcommand\myinput[1]{%
  \settowidth\mylen{\KwIn{}}%
  \setlength\hangindent{\mylen}%
  \hspace*{\mylen}#1\\}
\patchcmd{\thmhead}{(#3)}{#3}{}{}
\theoremstyle{plain}
\newtheorem{theorem}{Theorem}
\newtheorem{lemma}[theorem]{Lemma} 
\newtheorem{corollary}[theorem]{Corollary} 
\theoremstyle{definition} 
\newtheorem{definition}[theorem]{Definition}
\newtheorem{example}[theorem]{Example}
\newtheorem{remark}[theorem]{Remark}
\renewcommand{\todo}[2][]{\tikzexternaldisable\@todo[#1]{#2}\tikzexternalenable}
\newcommand{\C}{{\mathbb{C}}}
\newcommand{\R}{{\mathbb{R}}}
\newcommand{\K}{{\mathbb{K}}}
\newcommand{\F}{{\mathbb{F}}}
\newcommand{\Q}{{\mathbb{Q}}}
\newcommand{\Z}{{\mathbb{Z}}}
\newcommand{\A}{\mathcal{A}}
\newcommand{\cR}{\mathcal{R}}
\newcommand{\cD}{\mathcal{D}}
\newcommand{\cP}{\mathcal{P}}
\newcommand{\cC}{\mathcal{C}}
\newcommand{\T}{\mathcal{T}}
\newcommand{\mydef}[1]{{\color{blue}{#1}}}
\DeclareMathOperator{\ch}{ch}
\DeclareMathOperator{\codim}{codim}
\DeclareMathOperator{\Aut}{Aut}
\title[Characteristic polynomials of arrangements with symmetries]{Computing characteristic polynomials of hyperplane arrangements with symmetries} 
\author[T.\,Brysiewicz]{Taylor Brysiewicz}
\address{\textsuperscript{1}
  Department of Applied and Computational Mathematics and Statistics, University of Notre Dame}
\email{tbrysiew@nd.edu}
\author[H.\,Eble]{Holger Eble}
\address{\textsuperscript{2}
  Technische Universit\"at Berlin, Chair of Discrete Mathematics/Geometry}
\email{eble@math.tu-berlin.de}
\author[L.\,K\"uhne]{Lukas K\"uhne}
\address{\textsuperscript{3}
	Fakult\"at f\"ur Mathematik, Universit\"at Bielefeld, Bielefeld, Germany}
\email{lkuehne@math.uni-bielefeld.de}
\subjclass[2010]{52C35, 52B15}
\begin{document}

\begin{abstract}
We introduce a new algorithm computing the characteristic polynomials of hyperplane arrangements which exploits their underlying symmetry groups. Our algorithm counts the chambers of an arrangement as a byproduct of computing its characteristic polynomial. 
We showcase our \texttt{julia} implementation, based on \texttt{OSCAR}, on examples coming from hyperplane arrangements with applications to physics and computer science.
\smallskip
\\ \textbf{Keywords.} Hyperplane arrangement, chambers, algorithm, symmetry, resonance arrangement, separability
\end{abstract}
\maketitle

\section{Introduction}
The problem of enumerating chambers of hyperplane arrangements is a ubiquitous challenge in computational discrete geometry \cite{HM17,KP20,MR19,Sle99}. A well-known approach to this problem is through the computation of characteristic polynomials \cite{Ath96,HK12,KS11,OT92,ST87,Zas75}. We develop a novel for computing characteristic polynomials which takes advantage of the combinatorial symmetries of an arrangement.
While most arrangements admit few combinatorial symmetries \cite{PvP19}, most arrangements of  interest do \cite{GMP21,PS00,Zue92}.

We implemented our algorithm in \texttt{julia}~\cite{julia} and published it as the package  \texttt{CountingChambers.jl}\footnote{available at \href{https://mathrepo.mis.mpg.de/CountingChambers/index.html}{https://mathrepo.mis.mpg.de/CountingChambers}}.
Our implementation relies heavily on the cornerstones of the new computer algebra system \texttt{OSCAR} \cite{OSCAR} for group theory computations (\texttt{GAP} \cite{GAP}) and the ability to work over number fields (\texttt{Hecke} and \texttt{Nemo} \cite{nemo}). While other algorithms and pieces of software exist for studying hyperplane arrangements  (see, for instance, \cite{CS21,DP22,KP20,alcove,sage}), either their chamber-enumeration computations appear as byproducts of more difficult calculations, the code does not use symmetry, or it only pertains to very specific types of arrangements. For example, \cite{KP20} computes the associated zonotope, whose vertices are in bijection with the chambers of the arrangement, containing much more information than the characteristic polynomial. A similar approach is suggested in \cite{DP22} involving a search algorithm relying upon linear programming.
To the best of our knowledge, our implementation is the first publicly available software for counting chambers which uses symmetry.

We showcase our algorithm and its implementation on a number of well-known examples, such as the resonance and discriminantal arrangements. Additionally, we study sequences of hyperplane arrangements which come from the problem of linearly separating vertices of regular polytopes. In particular, we investigate one corresponding to the hypercube $[0,1]^d$ whose chambers are in bijection with linearly separable Boolean functions. 

In the presence of symmetry, our implementation outperforms the existing software by several orders of magnitude (cf. Table \ref{tab:CompareToOthers}). Moreover, its output is guaranteed to be correct since we compute symbolically over the integers or exact number fields and avoid overflow errors thanks to the package \texttt{SaferIntegers.jl} \cite{SaferIntegers}.


The ninth resonance arrangement ($511$ hyperplanes in $\R^{9}$) approaches the limit of what is possible with our implementation: the computation of its characteristic polynomial took $10$ days on $42$ processors. Our computation confirms that its chamber-count is $1955230985997140$ as independently and concurrently computed by Chroman and Singhar with different methods~\cite{CS21}.

We first give background on hyperplane arrangements in Section \ref{sec:Arrangements}.  The ideas outlined in Section \ref{sec:DeletionRestriction}, regarding deletion and restriction algorithms, form the basic structure of our algorithm. We explain the relevant results regarding symmetries of arrangements in Section \ref{sec:Automorphisms}. The algorithm and its implementation details reside in Section \ref{sec:OurAlgorithm}. In Section \ref{sec:Examples} we construct and discuss examples of arrangements exhibiting large symmetry groups.  We conclude in Section \ref{sec:Timings} with timings and comparisons to other software.

\section*{Acknowledgements}

We are very grateful to Tommy Hofmann, Christopher Jefferson, and Marek Kaluba for their support regarding the implementation and to Michael Cuntz for initial verifications of our computations. We would also like to thank Michael Joswig for his helpful comments throughout the project and Bernd Sturmfels for suggesting the discriminantal arrangement. Lastly, we thank the referees for their careful reading and helpful comments.

\section{Hyperplane arrangements}
\label{sec:Arrangements}

We begin by discussing background on the theory of hyperplane arrangements related to the problem of enumerating chambers: the main goal of this article and the associated software.
Our notation will mostly follow the textbook by Orlik and Terao~\cite{OT92}.

For any field $\K$, a \mydef{hyperplane} in $\K^d$ is an affine linear space of codimension one. 
Throughout this article, we denote by $\mydef{\A} = \{  H_1,\ldots,H_n\}$ a \mydef{(hyperplane) arrangement} where $H_i$ is a hyperplane in $\K^d$.

\begin{definition}
	Suppose $\A$ is an arrangement in $\R^d$.
	The connected components of the complement $\R^d\setminus \bigcup_{H\in\A}H$ are called \mydef{chambers} of $\mathcal A$ and the set of chambers of $\mathcal A$ is denoted by $\mydef{\ch(\mathcal A)}$.
\end{definition} 

\begin{example}
	\label{ex:arr}
	We use the arrangement \[\{\underbrace{\{y-x=1\}}_{H_1},\underbrace{\{x=0\}}_{H_2},\underbrace{\{x+y=1\}}_{H_3},\underbrace{\{y=0\}}_{H_4}\}\]
	in $\R^2$ as a running example.
	This arrangement is depicted in~\Cref{fig:ex}. It has $10$ chambers: $2$ bounded and $8$ unbounded.
\end{example}

\begin{figure}[!htpb]
	\begin{center}
		\includegraphics[scale=0.45]{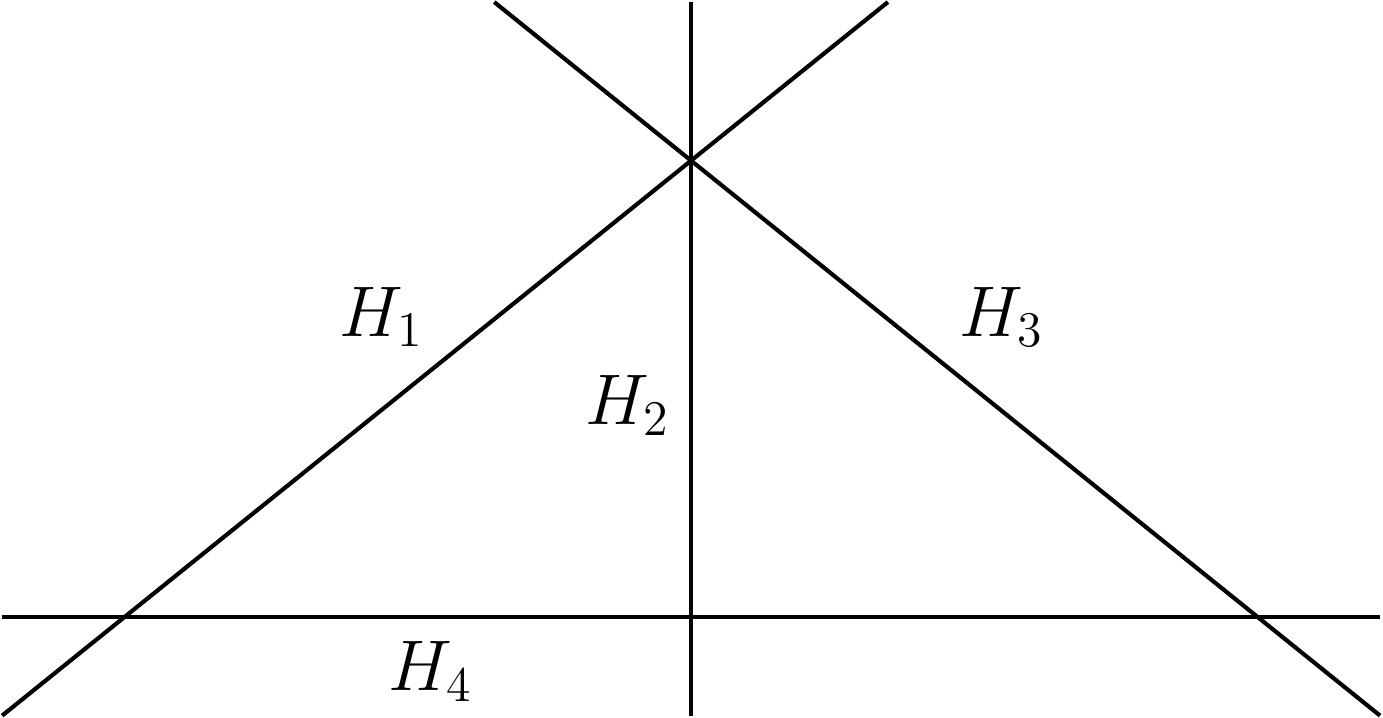}
	\end{center}
\caption{The arrangement introduced in~\Cref{ex:arr}.}
\label{fig:ex}
\end{figure}

Given a subset $I \subseteq \mydef{[n]}:=\{1,\ldots,n\}$, we write the set $\{H_i\}_{i \in I}$ as $\mydef{H_I}$ and its intersection as $\mydef{L_I}=\bigcap_{i \in I} H_i$. 
The collection of these intersections form the set $\mydef{L(\A)}= \left\{ L_I \mid I \subseteq \left[n\right], L_I \neq \emptyset \right\}$, a combinatorial shadow of $\A$ known as its \mydef{intersection poset}.
This poset is ordered by reverse inclusion and graded by the \mydef{rank function},
$\mydef{r}:L(\A)\to \Z_{\ge 0}$, where
$r(L_I)=\codim\left(L_I\right)$.
As a notational convention, we set $r(I) = r(L_I)$ for $I \subseteq [n]$ whenever $L_I \neq \emptyset$.

\subsection{The characteristic polynomial}
Our algorithm counts chambers of an arrangement by computing a more refined count, namely the characteristic polynomial. The coefficients of this polynomial are known as the unsigned Whitney numbers of the first kind of the intersection poset $L(\mathcal A)$, which we simply refer to as the \mydef{Whitney numbers} of the arrangement. 
\begin{definition}
	The \mydef{characteristic polynomial} of an arrangement $\mathcal A$ in $\K^d$ is the polynomial
	\begin{equation}\label{eq:char}
	\mydef{\chi_{\mathcal A}(t)} = \sum_{I \subseteq [n] : L_I \neq \emptyset}(-1)^{|I|} t^{d-r(I)}=\sum_{i=0}^{d} (-1)^i \mydef{b_{i}(\mathcal A)} t^{d-i}.
	\end{equation}
	The integers $b_i(\mathcal A)$, defined via~\eqref{eq:char}, are non-negative and are called the \mydef{Whitney numbers} of $\mathcal A$.
	We denote the vector of Whitney numbers by $\mydef{b(\A)}$.
\end{definition}

The characteristic polynomial and Whitney numbers of an arrangement $\A$ depend only on the intersection poset $L(\A)$ and have various interpretations depending on the field $\K$ as detailed below.

\begin{description}
	\item[Real] For an arrangement $\A$ in $\R^d$,
	Zaslavsky~\cite{Zas75} proved that 
	\[
	|\ch(\A)|=(-1)^d \chi_{\A}(-1)=\sum_{i=0}^d b_i(\A).
	\]
	Thus, the Whitney numbers are a refined count of the chambers of $\A$.
	They have the following geometric interpretation.
	Given a generic flag $\mathcal{F}_\bullet:  F_{0} \subset F_1 \subset \cdots \subset F_d=\R^d$ of affine linear subspaces $F_i$ (where $\dim(F_i)=i$)  the number of chambers of $\mathcal A$ which meet $F_i$ but do not meet $F_{i-1}$ is equal to $b_i(\A)$~\cite[Proposition 2.3.2]{Yos07}.
	
	\item[Complex] If $\A$ is an arrangement in $\C^d$ where all hyperplanes contain the origin, then $b_i(\A)$ is the $i$-th {\it{topological}} Betti number of the complement $\C^d\setminus \bigcup_{H\in\A}H$ with rational coefficients~\cite{OS80}. Because of this, some papers refer to the Whitney numbers $b_i(\mathcal A)$ as the Betti numbers of the arrangement $\mathcal A$ \cite{Yos14}.

	\item[Finite] When $\A$ is an arrangement over a finite field $\F_q$,
	Crapo and Rota proved that $\chi_{\A}(q)=|\F_q^d\setminus \bigcup_{H\in\A}H|$~\cite{CR70}.
	Moreover, if $\A$ is a hyperplane arrangement in $\Q^d$ one may consider its reduction modulo $q$: $\A\otimes\F_q=\{H_1\otimes \F_q,\dots,H_n\otimes \F_q\}$.
	When $q$ is sufficiently large, we have that $L(\A)=L(\A\otimes \F_q)$ and thus computing $\chi_{\A}(t)$ for rational arrangements also yields the number of points in the complement after reducing modulo large primes.

\end{description}

\begin{example}
Let $\A$ be the arrangement introduced in~\Cref{ex:arr}.
Its characteristic polynomial is $\chi_{\A}(t)=t^2-4t+5$.
\Cref{fig:flag} shows a generic flag~$\mathcal{F}_\bullet$ intersecting this arrangement verifying that $b(\mathcal A) = (1,4,5)$.

\vspace{1cm}

\begin{figure}[!htpb]
\includegraphics[scale=0.4]{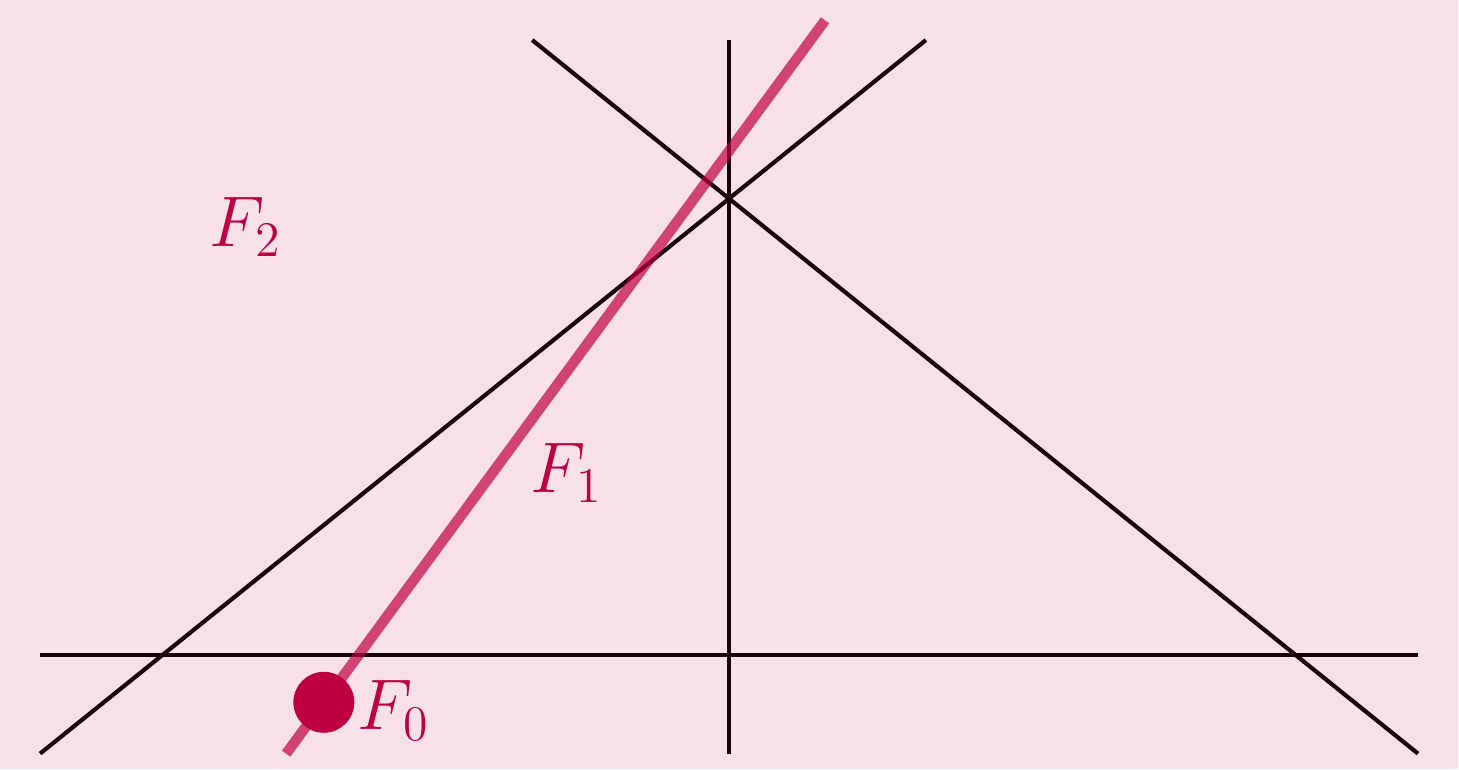}
\caption{The intersections of a generic flag (purple) in $\R^2$ with the chambers of $\mathcal A$. The point $F_0$ intersects one chamber, $F_1$ intersects four others, and $F_2$ intersects the remaining~$5$, and so $b(\mathcal A) = (1,4,5)$.}
\label{fig:flag}
\end{figure}
\end{example}

\section{A deletion-restriction algorithm}
\label{sec:DeletionRestriction}
To compute the Whitney numbers of an arrangement $\A$ in $\K^d$, we take advantage of the behavior of $\chi_{\A}(t)$ under the operations of deletion and restriction.
These operations reduce computations about $\mathcal A$ to computations about two smaller arrangements. 
Thus at its core, our main algorithm is a divide-and-conquer algorithm.

Given a hyperplane $H \in \mathcal A$, the \mydef{deletion} of $H$ in $\mathcal A$ is the arrangement ${\mathcal{A}\backslash \{H\}}$.
The \mydef{restriction} of $H$ in $\mathcal A$ is the 
arrangement in $H \cong \K^{d-1}$ defined by $\mydef{\mathcal{A}^H} = \{K \cap H \mid K \in \mathcal A \backslash \{H\}\}$.
The following lemma provides the basic foundation of our algorithm.
 \begin{lemma}[{\cite[Corollary 2.57]{OT92}}]
\label{lem:charPolyDR} 
Given a hyperplane $H \in \mathcal A$, we have that 
$\chi_{\mathcal A}(t) = \chi_{\mathcal A \backslash \{H\} }(t) - \chi_{\mathcal A^H}(t).$
In particular,
$b(\mathcal A) = b(\mathcal A\backslash \{H\}) + 0|b(\mathcal A^{H})$
where $0|b$ means prepending the vector $b$ with a zero. 
\end{lemma}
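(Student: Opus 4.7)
The plan is to work directly from the defining sum in~\eqref{eq:char} of the characteristic polynomial. Without loss of generality I would relabel so that $H=H_n$, and then split the sum over $I\subseteq[n]$ with $L_I\neq\emptyset$ according to whether $n\in I$ or not:
\begin{equation*}
\chi_{\A}(t) \;=\; \sum_{\substack{I\subseteq[n-1]\\ L_I\neq\emptyset}}(-1)^{|I|}\,t^{d-r(I)} \;+\; \sum_{\substack{J\subseteq[n-1]\\ L_J\cap H\neq\emptyset}}(-1)^{|J|+1}\,t^{d-r(L_J\cap H)}.
\end{equation*}
The first sum is $\chi_{\A\setminus\{H\}}(t)$ by inspection, since deleting $H$ leaves the intersection poset of the remaining hyperplanes untouched. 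The heart of the proof is therefore to identify the second sum with $-\chi_{\A^H}(t)$.

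To do this I would handle two points. First, a rank shift: when $L_J\cap H\neq\emptyset$, one checks directly (whether or not $L_J$ is contained in $H$) that $r(L_J\cap H)=r_{\A^H}(L_J\cap H)+1$, so $t^{d-r(L_J\cap H)}=t^{(d-1)-r_{\A^H}(L_J\cap H)}$, matching the exponents used in $\chi_{\A^H}(t)$. Second, multiple indices $i\in[n-1]$ may produce the same restricted hyperplane, so the assignment $i\mapsto H_i\cap H$ is a surjection $\phi$ from $\{i\in[n-1]:H_i\not\parallel H\}$ onto the index set $[m]$ of $\A^H$. I would group the $J$'s in the second sum by their image $K=\phi(J)\subseteq[m]$ and use the identity
\begin{equation*}
\sum_{J\,:\,\phi(J)=K}(-1)^{|J|} \;=\; \prod_{k\in K}\Bigl(\sum_{\emptyset\neq S\subseteq\phi^{-1}(k)}(-1)^{|S|}\Bigr) \;=\; (-1)^{|K|},
\end{equation*}
which collapses the double sum over $J$'s into a single sum over $K$'s of the form defining $\chi_{\A^H}(t)$, with an overall sign $-1$ arising from the $(-1)^{|J|+1}$.

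The main obstacle is precisely this fiber-collapse bookkeeping: unlike the deletion side, restriction is not a bijective operation on hyperplanes, and the identification with $\chi_{\A^H}(t)$ hinges on the combinatorial identity above. Everything else is routine. Given the polynomial recursion $\chi_{\A}(t)=\chi_{\A\setminus\{H\}}(t)-\chi_{\A^H}(t)$, the Whitney-number recursion follows by extracting coefficients. Since $\A^H$ sits in $\K^{d-1}$, its characteristic polynomial has degree $d-1$; writing $-\chi_{\A^H}(t)=\sum_{j=1}^{d}(-1)^j b_{j-1}(\A^H)\,t^{d-j}$ shows that $b_0(\A)=b_0(\A\setminus\{H\})$ and $b_i(\A)=b_i(\A\setminus\{H\})+b_{i-1}(\A^H)$ for $i\geq 1$, which is exactly the relation $b(\A)=b(\A\setminus\{H\})+0\,|\,b(\A^H)$.
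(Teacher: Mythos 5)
Your argument is correct, and it takes a different route from the paper, which does not prove Lemma \ref{lem:charPolyDR} at all: it simply cites \cite[Corollary 2.57]{OT92}, where the identity is obtained from M\"obius-function recursions on the intersection lattice of the triple $(\A,\A\setminus\{H\},\A^H)$. You instead verify the recursion directly from the subset-sum definition \eqref{eq:char} used in this paper: splitting over whether $H=H_n$ lies in $I$, recognizing the first part as $\chi_{\A\setminus\{H\}}(t)$, and collapsing the second part onto $\A^H$ by grouping the sets $J$ along the fibers of $i\mapsto H_i\cap H$. The collapse is sound: each nonempty fiber $F$ contributes $\sum_{\emptyset\neq S\subseteq F}(-1)^{|S|}=-1$, giving the factor $(-1)^{|K|}$, and the shift $r(X)=r_{\A^H}(X)+1$ for nonempty $X\subseteq H$ aligns the exponents. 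Two small points are worth stating explicitly to make this airtight: (i) any $J$ in the second sum automatically avoids indices of hyperplanes parallel to $H$ (otherwise $L_{J\cup\{n\}}=\emptyset$), so $\phi(J)$ is defined; and (ii) $L_J\cap H=L_{\phi(J)}$ as a flat of $\A^H$, so both the exponent and the nonemptiness condition depend only on $K=\phi(J)$, which is what allows you to pull $t^{(d-1)-r_{\A^H}(K)}$ out of each fiber class. What your proof buys is a self-contained argument relative to the paper's definition of $\chi_{\A}$, with no M\"obius functions, and it visibly handles affine arrangements, parallel hyperplanes, and several hyperplanes restricting to the same hyperplane of $\A^H$ --- precisely the phenomena Lemma \ref{lem:charPolyDRpairs} must later accommodate; what the textbook route buys is the statement as an instance of a general deletion--restriction framework for $L(\A)$. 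Your final coefficient extraction, $b_0(\A)=b_0(\A\setminus\{H\})$ and $b_i(\A)=b_i(\A\setminus\{H\})+b_{i-1}(\A^H)$ for $i\ge 1$, is exactly $b(\A)=b(\A\setminus\{H\})+0|b(\A^{H})$.
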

\subsection{A simple deletion-restriction algorithm}
Lemma \ref{lem:charPolyDR} along with the fact that the  empty arrangement in $\K^d$ has the vector of Whitney numbers $(1,{0,\ldots,0})\in~\mathbb{N}^{d+1}$ suggests the following well-known recursive algorithm for computing $b(\mathcal A)$.

\begin{figure}[!htpb]
\includegraphics[scale=0.22]{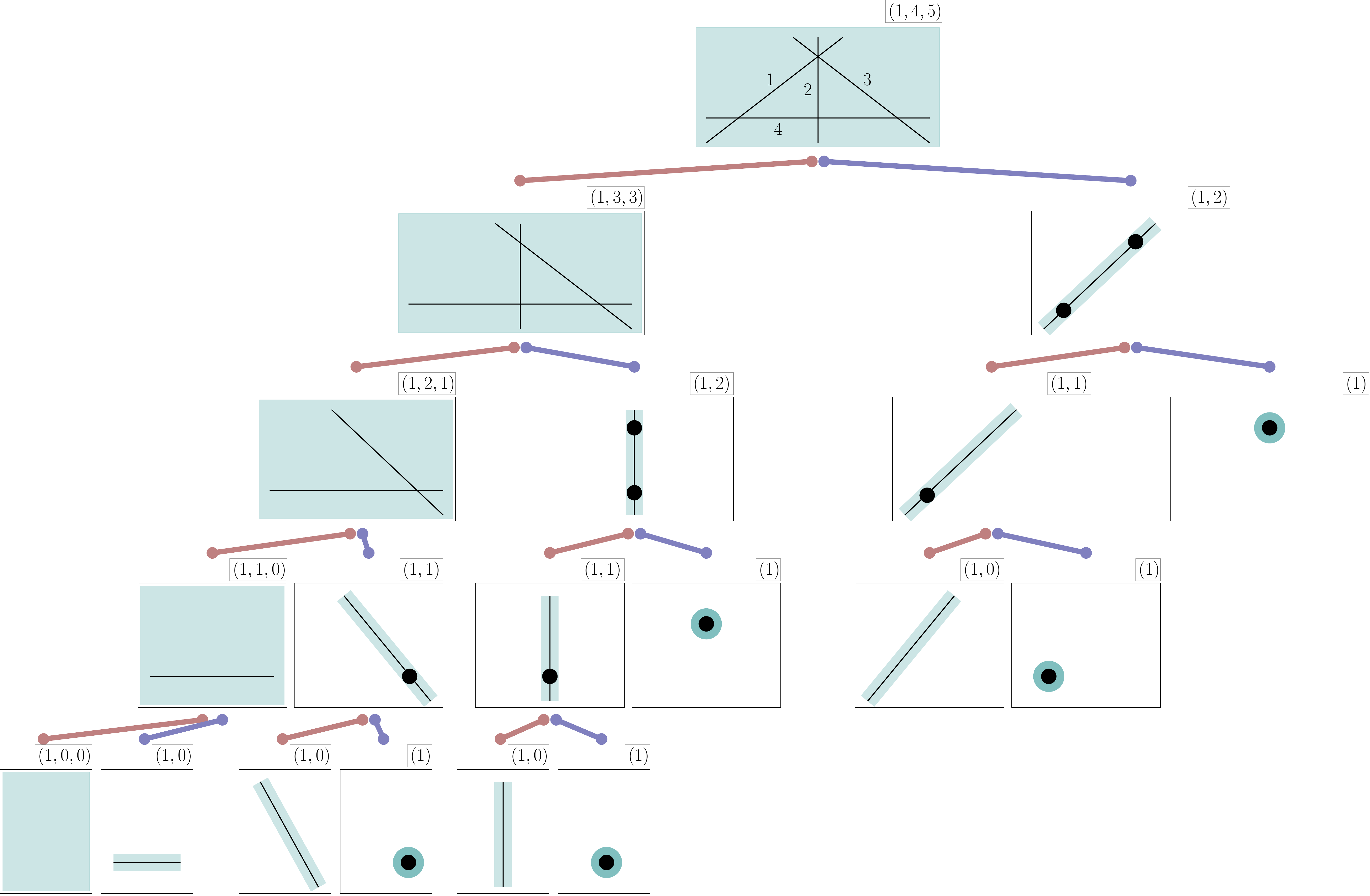}
\caption{The tree structure of Algorithm \ref{algo:simpleDR} on the hyperplane arrangement from Example \ref{ex:arr}. Hyperplanes are chosen (Line \ref{chooseHyperplane_Alg1}) according to the ordering $\{1,2,3,4\}$. In each box, the ambient space of the arrangement is shaded green. Deletions are marked with red edges (left children) and restrictions with blue edges (right children). Each arrangement box has the Whitney numbers above its upper right corner.}
\label{fig:simpletree}
\end{figure}

\begin{algorithm}[H]
	\SetKwIF{If}{ElseIf}{Else}{if}{then}{elif}{else}{}%
	\DontPrintSemicolon
	\SetKwProg{WhitneyNumbers}{WhitneyNumbers}{}{}
	\LinesNotNumbered
	\KwIn{A hyperplane arrangement $\mathcal A$ in $\K^d$
	}
	\KwOut{The vector of Whitney numbers $b(\mathcal A)$}
	\WhitneyNumbers(){$(\mathcal A)$}{
		\nl  \uIf{$\emptyset \neq \mathcal A$}{
			\nl\label{chooseHyperplane_Alg1}choose $H \in \mathcal A$ \;
			\nl \Return{{\small \bf{WhitneyNumbers}($\mathcal A \backslash \{H\}$)~+~$0|$\textbf{WhitneyNumbers}~($\mathcal A^H$)}}\;
		} 
 \nl		\Else{
	\nl		\Return{$(1,0,\ldots,0)$}
		}
	}
	\caption{Whitney numbers via simple deletion and restriction \label{algo:simpleDR}}
\end{algorithm}

\vspace{1cm}

Structurally, Algorithm \ref{algo:simpleDR} is a depth-first binary tree algorithm on arrangements, rooted at the initial input: one child represents a deletion and the other a restriction, as shown in Figure \ref{fig:simpletree}.

The implementation of Algorithm \ref{algo:simpleDR} is already nontrivial as it is often the case that some hyperplanes become the same after a restriction.  Thus, its proper implementation requires care in representing an arrangement on a computer.

\subsection{Computationally representing deletions and restrictions}\label{ComputerRepresentation}
An arrangement $\mathcal B$ coming from $\mathcal A$ via deletions and restrictions may be represented by an encoding of the restricted hyperplanes. To be precise, the~pair \[\mydef{B} = (\{H_{i_1},\ldots,H_{i_k}\},\{H_{j_1},\ldots,H_{j_\ell}\})=:(H_I,H_J)\]   represents the hyperplane arrangement $\mathcal B$ in $L_I \cong \K^{d-r(L_I)}$ given by the {\it hyperplanes} in $\{H_j \cap L_I\}_{j \in J}$. Note that $H_j \cap L_I$ may be empty for some~$j \in J$, in which case this intersection does not correspond to any hyperplane. We extend notation regarding $\mathcal B$ to its representation $B$ (i.e.~$\chi_B(t) := \chi_{\mathcal B}(t)$ and $b(B) := b(\mathcal B)$).

If $H_{j_1} \cap L_I$ is a hyperplane which occurs uniquely with respect to the tuple $(H_{j_1} \cap L_I,\ldots,H_{j_\ell} \cap L_I)$, then $\mathcal B^{H_{j_1} \cap L_I}$ 
and $\mathcal B\backslash \{H_{j_1} \cap L_I\}$ are represented by 
\begin{align*}
\mydef{B^{H_{j_1}}} &:=  (\{H_{i_1},\ldots,H_{i_k},H_{j_1}\},\{H_{j_2},\ldots,H_{j_\ell}\})\\
\mydef{B \backslash \{H_{j_1}\}} &:=  (\{H_{i_1},\ldots,H_{i_k}\},\{H_{j_2},\ldots,H_{j_\ell}\}), \end{align*}
respectively.
Whereas if $H_{j_1} \cap L_I$ is either empty or does not occur uniquely, then $B \backslash{\{H_{j_1}\}}$ trivially represents the same arrangement as $B$, namely $\mathcal B$. 

The following computational analogue of Lemma \ref{lem:charPolyDR} establishes how such representations behave under deletion and restriction. 
\begin{lemma}\label{lem:charPolyDRpairs}
Let $B = (H_I,H_J)$ represent an arrangement $\mathcal B$ and fix $H \in H_J$. If $H \cap L_I$ is a hyperplane which occurs uniquely in the tuple $(H_j \cap L_I)_{j \in J}$ then 
$\chi_{B}(t) = \chi_{ B \backslash{\{H\}}}(t)-\chi_{ B^H}(t)$ and $b(B) = b(B\backslash \{H\}) + 0|b(B^{H})$.
Otherwise, we have
$\chi_{ B}(t) = \chi_{ B \backslash{\{H\}}}(t)$ and $b(B) = b(B\backslash\{H\}).$

\end{lemma}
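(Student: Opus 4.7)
The proof is essentially a bookkeeping exercise that reduces the computational statement to the honest deletion-restriction identity of Lemma \ref{lem:charPolyDR}. The key idea is to show that the representation $B = (H_I, H_J)$, together with its operations $B \setminus \{H\}$ and $B^H$, correctly tracks the underlying arrangement $\mathcal{B}$ in $L_I$ and its deletion/restriction by $H \cap L_I$. Once this compatibility is established, both formulas follow by applying Lemma \ref{lem:charPolyDR} inside $L_I$ (or by observing that $\mathcal{B}$ itself is unchanged, in the degenerate case).

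First I would spell out exactly what $B$ represents: the hyperplanes of $\mathcal B$ are the \emph{distinct} subspaces of $L_I$ appearing in the tuple $(H_j \cap L_I)_{j \in J}$ that are genuine hyperplanes of $L_I$, i.e.\ that are neither empty nor all of $L_I$. With this convention in hand, I would split into the two cases of the lemma. In the first case, $H \cap L_I$ is a hyperplane appearing uniquely in the tuple, so in particular $H \cap L_I$ is an element of $\mathcal{B}$. I would then verify the two identifications
\[
    \mathcal{B} \setminus \{H \cap L_I\} \;=\; \text{arrangement represented by } B \setminus \{H\},
\]
\[
    \mathcal{B}^{H \cap L_I} \;=\; \text{arrangement represented by } B^H.
\]
The first identification follows because uniqueness of $H \cap L_I$ in the tuple means that deleting the index corresponding to $H$ from $J$ removes exactly this hyperplane from the represented set. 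For the second, note that $B^H = (H_I \cup \{H\}, H_J \setminus \{H\})$ represents an arrangement in $L_{I \cup \{j_1\}} = L_I \cap H = H \cap L_I$, whose hyperplanes are the distinct, proper, nonempty intersections $H_j \cap (L_I \cap H) = (H_j \cap L_I) \cap (H \cap L_I)$ for $j \in J \setminus \{j_1\}$. This is exactly the definition of $\mathcal B^{H \cap L_I}$. Applying Lemma \ref{lem:charPolyDR} to $\mathcal B$ at the hyperplane $H \cap L_I$ then yields the first claimed identity, and translating to Whitney numbers gives the $b$-version.

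For the second (``otherwise'') case, I would argue that $B \setminus \{H\}$ represents the \emph{same} arrangement $\mathcal B$. There are two subcases: either $H \cap L_I$ is not a hyperplane of $L_I$ (so it is empty or equals $L_I$, and hence contributed nothing to $\mathcal B$ in the first place), or $H \cap L_I$ is a hyperplane that coincides with $H_{j_k} \cap L_I$ for some other $j_k \in J$ (so deleting the index $j_1$ does not remove the corresponding hyperplane from the set of distinct intersections). In both subcases the set of distinct hyperplanes extracted from $(H_j \cap L_I)_{j \in J \setminus \{j_1\}}$ equals that extracted from $(H_j \cap L_I)_{j \in J}$. Hence $\chi_{B \setminus \{H\}}(t) = \chi_{\mathcal B}(t) = \chi_B(t)$, and similarly for $b$.

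The only subtle point, and thus the one I would be most careful about, is the bookkeeping in the first case: checking that the set theoretic ``uniqueness'' condition on the tuple is precisely what is needed to match $B^H$ with $\mathcal{B}^{H \cap L_I}$ and $B \setminus \{H\}$ with $\mathcal{B} \setminus \{H \cap L_I\}$ simultaneously. Everything else is routine once these identifications are in place, since Lemma \ref{lem:charPolyDR} already does all of the real combinatorial work.
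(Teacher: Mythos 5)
Your proposal is correct and follows the same route as the paper: the paper's own proof simply invokes Lemma \ref{lem:charPolyDR} in the first case and notes that $B$ and $B\backslash\{H\}$ represent the same arrangement in the second, which is exactly the structure of your argument, only with the bookkeeping (matching $B\backslash\{H\}$ and $B^H$ with the true deletion and restriction of $\mathcal B$ by $H\cap L_I$) spelled out explicitly. No gaps; your version is just a more detailed rendering of the paper's two-line proof.
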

\begin{proof}
The first case follows from Lemma \ref{lem:charPolyDR}. In the second case, $B$ and~$B\backslash \{H\}$ represent the same hyperplane arrangement and the result is trivial. 
\end{proof}
The following algorithm is equivalent to Algorithm \ref{algo:simpleDR}.

\begin{algorithm}[H]
	\SetKwIF{If}{ElseIf}{Else}{if}{then}{elif}{else}{}%
	\DontPrintSemicolon
	\SetKwProg{WhitneyNumbers}{WhitneyNumbers}{}{}
	\LinesNotNumbered
	\KwIn{A representation $B=(H_I,H_J)$ of an arrangement in $\K^d$
	}
	\KwOut{The vector of Whitney numbers $b(B)$}
	\WhitneyNumbers(){$B=(H_I,H_J)$}{
		\nl  \uIf{$\emptyset \neq H_J$}{
			\nl\label{chooseHyperplane_Alg2}choose $H \in H_J$ \;
			\nl\label{uniquenessCond}\uIf{$H\cap L_I\neq \emptyset$ occurs uniquely in $(H_{j} \cap L_I)_{j \in J}$}{
			\nl \Return{{\small \bf{WhitneyNumbers}($B\backslash~\{H\}$)~+~$0|$\bf{WhitneyNumbers}~(~$B^H$)}}\;
			}
\nl			\Else{
\nl			\Return{{\small \bf{WhitneyNumbers}($B\backslash \{H\}$)}}
}
		} 
 \nl		\Else{
	\nl		\Return{$(1,0,\ldots,0)$}
		}
	}
	\caption{Whitney numbers via extended deletion and restriction \label{algo:extendedDR}}
\end{algorithm}

Given a hyperplane arrangement $\mathcal A = \{H_1,\ldots,H_n\}$ in $\K^d$, Algorithm~\ref{algo:extendedDR} computes the Whitney numbers $b_i(\mathcal A)$ when given $A = (\emptyset,\{H_1,\ldots,H_n\})$ as input. This algorithm traverses a binary tree which is essentially the same as the one from Algorithm \ref{algo:simpleDR}. The only difference is that some edges are extended with nodes that have only one child and so we say it computes the Whitney numbers via {\it{extended}} deletion and restriction.

Algorithm \ref{algo:extendedDR} has the advantage that the representations of the original hyperplanes in $\mathcal A$ need not be updated upon restriction, and that representations of hyperplanes in $\mathcal A^H$ need not be unique. As a consequence, structural aspects of $\mathcal A$ such as its symmetries extend trivially to the representations of the restricted arrangements, as we explain in Section \ref{sec:Automorphisms}.
Figure~\ref{fig:DRTreeExtended} displays the tree structure underlying Algorithm \ref{algo:extendedDR} on our running example. Note that $J$ is constant amongst nodes in the same depth.

\begin{figure}[!htpb]
\includegraphics[scale=0.25]{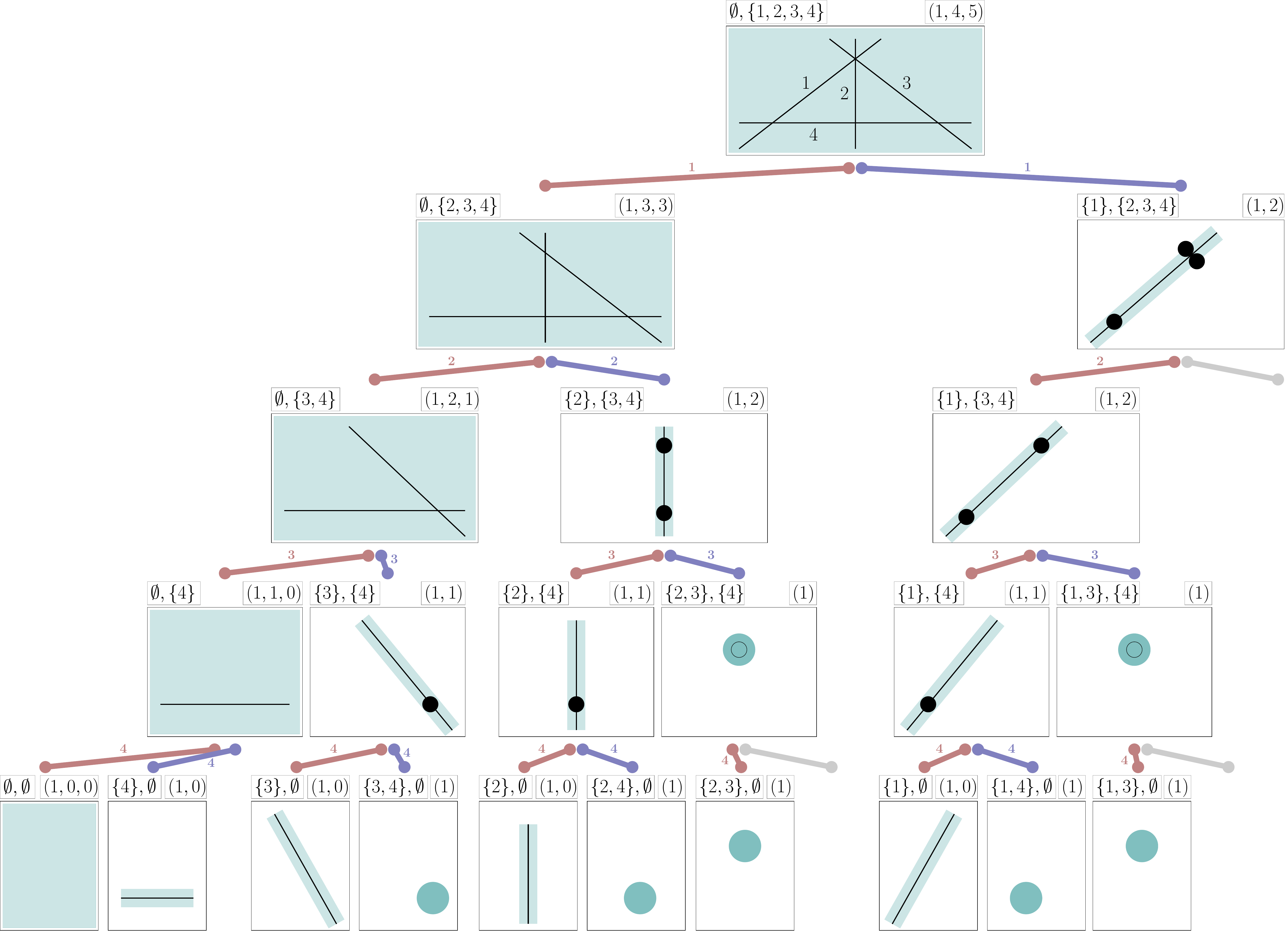}
\caption{The tree structure of Algorithm \ref{algo:extendedDR} on the hyperplane arrangement from Example \ref{ex:arr}. Its nodes are represented by pairs of subsets $I,J \subset \{1,2,3,4\}$ (top-left) and the Whitney numbers are given (top-right). Grey edges indicate that the condition in Line \ref{uniquenessCond} has been violated.
}
\label{fig:DRTreeExtended}
\end{figure}

\section{Automorphisms of hyperplane arrangements}
\label{sec:Automorphisms}
Our main contribution is the inclusion of symmetry-reduction in the deletion-restriction algorithm. Many other algorithms in discrete geometry have also been adapted to take advantage of symmetry \cite{Bremner1,Bremner2,Jensen,JJK}. For us, the relevant symmetries for an arrangement are the rank-preserving permutations of its hyperplanes.

Let $\mydef{\mathfrak S_n}$ be the permutation group on $[n]$. Elements of a subgroup $G \leq \mathfrak S_n$   act on subsets of $[n]$. Given $g \in G$ and $I \subseteq [n]$, we fix the notation \vspace{-0.2em}
\begin{itemize}
\item[-] $\mydef{gI}=\{g(i)\}_{i \in I}$ for the \mydef{image} of $I$ under $g$,
\item[-] $\mydef{I^G}=\{g \in G \mid gI=I\}$ for the \mydef{stabilizer} of $I$ in $G$,
\item[-] $\mydef{G\cdot I}=\{gI \mid g \in G\}$ for the \mydef{orbit} of $I$ under $G$.
\end{itemize}
\begin{definition} The \mydef{automorphism group} of $\mathcal A=\{H_1,\ldots,H_n\}$ is
$$\mydef{\Aut(\mathcal A)} = \{g \in \mathfrak S_n \mid r(H_I) = r(H_{gI}) \text{ for all } I \subseteq [n]\}.$$
\end{definition}

Given a representation $B=(H_I,H_J)$ of an arrangement coming from $\A$, the automorphism group $\Aut(\A)$ acts as $\mydef{gB}=(H_{gI},H_{gJ})$.

\begin{remark}
Our definition of the automorphism group of an arrangement is combinatorial, not geometric. This difference can be quite large. For example, a generic hyperplane arrangement $\mathcal A=\{H_1,\ldots,H_n\}$ has no geometric symmetries but $\textrm{Aut}(\mathcal A) = \mathfrak S_n$.
\end{remark}

\begin{lemma}
\label{lem:automorphisms}
Let $\mathcal A=\{H_1,\ldots,H_n\}$ be an arrangement in $\K^{d}$ and let $B_1$ and $B_2$ represent arrangements coming from deletions and restrictions. If $B_1$ and  $B_2$ are in the same orbit under $\Aut(\A)$ then $b(B_1) = b(B_2)$.
\end{lemma}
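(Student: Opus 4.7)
The plan is to expand $\chi_{B_1}(t)$ and $\chi_{B_2}(t)$ via \eqref{eq:char} and use an automorphism $g \in \Aut(\A)$ with $gB_1 = B_2$ to match the two sums term-by-term. Writing $B_i = (H_{I_i}, H_{J_i})$, the hypothesis becomes $gI_1 = I_2$ and $gJ_1 = J_2$. The arrangement $\mathcal{B}_i$ represented by $B_i$ lives in $L_{I_i} \cong \K^{d-r(I_i)}$, its hyperplanes are (the distinct genuine codimension-one members of) $\{H_j \cap L_{I_i}\}_{j \in J_i}$, and any intersection indexed by $K \subseteq J_i$ is the stratum $L_{I_i \cup K}$, of codimension $r(I_i \cup K) - r(I_i)$ inside $L_{I_i}$.

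First I would establish the working formula
\[
\chi_{B_i}(t) \;=\; \sum_{\substack{K \subseteq J_i\\ L_{I_i\cup K}\neq\emptyset}} (-1)^{|K|}\, t^{(d - r(I_i)) - (r(I_i\cup K) - r(I_i))}.
\]
This is obtained by applying \eqref{eq:char} to $\mathcal{B}_i$, but with a small wrinkle: in the extended deletion-restriction setting the tuple $(H_j\cap L_{I_i})_{j\in J_i}$ may contain repetitions or non-hyperplane entries, so one has to check that indexing the sum over $J_i$ rather than over the set of distinct hyperplanes of $\mathcal{B}_i$ still yields the correct polynomial. A short inclusion-exclusion argument confirms that any duplicated index contributes cancelling summands, leaving $\chi_{\mathcal{B}_i}(t)$ unchanged, and non-hyperplane entries correspond to $L_{I_i\cup K} = \emptyset$ and so drop out of the sum.

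Next I would use that $g \in \Aut(\A)$ preserves the rank function (and hence also the non-emptiness of the strata) on every subset of $[n]$. Taking $S = I_1 \cup K$ for $K \subseteq J_1$ and using $g(I_1 \cup K) = I_2 \cup gK$, we get $r(I_1 \cup K) = r(I_2 \cup gK)$ and, as the $K = \emptyset$ case, $r(I_1) = r(I_2)$. Since $gJ_1 = J_2$, the assignment $K \mapsto gK$ is a cardinality-preserving bijection from subsets of $J_1$ to subsets of $J_2$ that respects the non-emptiness condition $L_{I\cup K}\neq\emptyset$. Plugging into the displayed formula matches the two sums term-by-term: the signs $(-1)^{|K|}$ and the exponents of $t$ all agree, so $\chi_{B_1}(t) = \chi_{B_2}(t)$ and thus $b(B_1) = b(B_2)$.

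The bulk of the work, I expect, is bookkeeping around the displayed formula together with the multiset subtlety; once in hand, the use of $g$ is immediate. A cleaner alternative is to phrase everything in terms of the ranked intersection poset: observe that $g$ induces a ranked-poset isomorphism $L(\mathcal{B}_1) \to L(\mathcal{B}_2)$ and invoke that $\chi$ depends only on this poset. This bypasses the duplicated-hyperplanes issue but hides the explicit bijection on index sets; I would nonetheless present the proof via the index-set bijection because it mirrors the computational representation $(H_I, H_J)$ actually manipulated by the algorithm.
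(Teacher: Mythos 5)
Your overall strategy is sound and genuinely different from the paper's: the paper disposes of the lemma in two sentences, observing that the characteristic polynomial depends only on the intersection poset graded by rank and that two representations in the same $\Aut(\mathcal A)$-orbit are related by a rank-preserving permutation --- that is, the ``cleaner alternative'' you relegate to your final paragraph is in fact the paper's proof. Your explicit expansion over index sets buys a self-contained verification phrased directly in the data $(H_I,H_J)$ that the algorithm manipulates, and your use of $g$ (preservation of rank and of non-emptiness, the bijection $K\mapsto gK$, the case $K=\emptyset$ giving $r(I_1)=r(I_2)$) is correct; the duplicated-index cancellation is also fine.

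There is, however, a genuine gap in the justification of your working formula. You assert that the entries of $(H_j\cap L_{I_i})_{j\in J_i}$ which are not hyperplanes are exactly those leading to $L_{I_i\cup K}=\emptyset$. An entry can also fail to be a hyperplane because $L_{I_i}\subseteq H_j$, in which case $H_j\cap L_{I_i}=L_{I_i}$ is the whole ambient space. Such an index behaves like a loop: pairing $K$ with $K\cup\{j\}$ (for $K\not\ni j$) cancels every term, so your right-hand side is identically $0$ rather than $\chi_{\mathcal B_i}(t)$. For instance, with three concurrent lines in $\K^2$, $I_i=\{1,2\}$ and $3\in J_i$, the represented arrangement in the point $L_{I_i}$ is empty with characteristic polynomial $1$, while your sum gives $0$. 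To close the gap you must either exclude these indices from the sum on both sides --- they are matched by $g$, since $L_{I_1}\subseteq H_j$ is equivalent to the rank condition that $L_{I_1\cup\{j\}}\neq\emptyset$ and $r(I_1\cup\{j\})=r(I_1)$ --- or prove that such indices never occur in a representation built by legal deletions and restrictions. The latter is true (so the example above is not actually reachable), but it requires an argument: inductively, when $j_1$ is moved into $I$ the condition of Lemma \ref{lem:charPolyDRpairs} requires $H_{j_1}\cap L_I$ to be a hyperplane occurring uniquely in $(H_j\cap L_I)_{j\in J}$; if some remaining $H_j$ contained $L_{I\cup\{j_1\}}$, then $H_j\cap L_I$ would be nonempty and, by the inductive hypothesis, a codimension-one affine subspace of $L_I$ containing the codimension-one subspace $H_{j_1}\cap L_I$, hence equal to it, contradicting uniqueness. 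With either patch your term-by-term matching goes through; without one, the identity you start from is unjustified precisely in the case your bookkeeping was meant to handle.
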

\begin{proof}
The conclusion of the lemma is equivalent to showing that the characteristic polynomials of $B_1$ and $B_2$ are the same. This follows directly from the fact that the characteristic polynomial depends only on the intersection poset (graded by rank) and that $B_1$ and $B_2$ are in the same orbit under $\Aut(\A)$ if and only if they are related by a rank-preserving permutation.
\end{proof}

Our algorithm relies upon the following corollary of Lemma \ref{lem:automorphisms}.
\begin{corollary}
\label{cor:stabAutomorphism}
Let $B=(H_I,H_J)$ represent a hyperplane arrangement coming from $\mathcal A=\{H_1,\ldots,H_n\}$. For $g \in {J}^{\Aut(\mathcal A)}$ we have that $gB=(H_{gI},H_J)$ and $B$ have the same Whitney numbers.
\end{corollary}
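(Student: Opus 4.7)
The plan is to reduce the statement directly to Lemma~\ref{lem:automorphisms} by unpacking the definition of the stabilizer. First I would observe that the hypothesis $g \in J^{\Aut(\mathcal A)}$ means, by the stabilizer notation introduced just before the definition of $\Aut(\mathcal A)$, that $g$ lies in $\Aut(\mathcal A)$ and satisfies $gJ = J$. Applying the given action of $\Aut(\mathcal A)$ on representations, $gB = (H_{gI}, H_{gJ})$, which by $gJ = J$ simplifies to $(H_{gI}, H_J)$, establishing the first assertion.

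Next I would argue that $B$ and $gB$ lie in the same $\Aut(\mathcal A)$-orbit on representations of arrangements obtained from $\mathcal A$ by deletions and restrictions: indeed, $g$ itself is the witnessing element. Then Lemma~\ref{lem:automorphisms} immediately gives $b(B) = b(gB)$, which is the desired equality of Whitney numbers.

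The only mild subtlety worth flagging is purely bookkeeping: one has to be sure that $gB = (H_{gI}, H_J)$ really does represent an arrangement obtained from $\mathcal A$ by the deletion--restriction process, so that Lemma~\ref{lem:automorphisms} is applicable. This is justified because $\Aut(\mathcal A)$ preserves ranks of intersections, so $L_{gI}$ has the same rank as $L_I$ and the hyperplanes indexed by $J$ intersect $L_{gI}$ in the same combinatorial pattern as they intersect $L_I$; thus $gB$ is indeed a legitimate representation arising in Algorithm~\ref{algo:extendedDR}. Beyond this, there is no real obstacle: the statement is essentially a one-line consequence of Lemma~\ref{lem:automorphisms} once the stabilizer condition has been translated into the equality $gJ = J$.
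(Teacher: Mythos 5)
Your proposal is correct and matches the paper's intended argument: the paper states this as an immediate corollary of Lemma~\ref{lem:automorphisms}, relying precisely on the observation that $g\in J^{\Aut(\mathcal A)}$ gives $gJ=J$, hence $gB=(H_{gI},H_J)$ lies in the same orbit as $B$ and the lemma applies. Your extra remark that $gB$ is a legitimate representation (because $\Aut(\mathcal A)$ is rank-preserving) is a reasonable bookkeeping check but introduces nothing beyond what the paper's one-line deduction already presumes.
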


\section{Enumeration algorithm with symmetry}
\label{sec:OurAlgorithm}
Our main algorithm augments Algorithm \ref{algo:extendedDR}, making particular use of Corollary \ref{cor:stabAutomorphism}. It is essentially a breadth-first tree algorithm except that at each level, nodes may be identified up to symmetry and so the algorithmic structure is no longer that of a tree. The output is the vector of Whitney numbers $b(\mathcal{A})$ of an arrangement $\mathcal A$, refining its chamber count. We remark that despite the fact that our algorithm takes advantage of symmetry and counts the number of chambers, it does not reveal any information about the sizes of orbits of chambers under this symmetry group.

Given an arrangement $\A=\{H_1,\ldots,H_n\}$ in $\K^d$, we represent the nodes of the algorithm at depth $k$ by a dictionary $T_k$. The keys of $T_k$ are orbits~$G_k\cdot I$ for $I \subseteq [k]$ where $G_k$ is a subgroup of the stabilizer of $\{k+1,\ldots,n\}$ in $\Aut(\A)$. The value of $G_k\cdot I$ in this dictionary is a pair $(B_I,\omega(B_I))$ where $B_I$ represents the hyperplane arrangement $(H_I,H_{\{k+1,\ldots,n\}})$ and $\mydef{\omega(B_I)}$ is some multiplicity, tracking how many arrangements indexed by elements of the orbit $G_k\cdot I$ have appeared. We refer to $T_k$ as a \mydef{$k$-th orbit-node dictionary}.

Algorithm \ref{algo:DRwSymmetry} presents the breadth-first structure of the algorithm.

\begin{algorithm}[H]
	\SetKwIF{If}{ElseIf}{Else}{if}{then}{elif}{else}{}%
	\DontPrintSemicolon
	\SetKwProg{WhitneyNumbers}{WhitneyNumbers}{}{}
	\LinesNotNumbered
	\KwIn{A hyperplane arrangement $\mathcal A=\{H_1,\ldots,H_n\}$ in $\K^d$ 
	}
	\myinput{A subgroup $G \leq \Aut(\A)$}
	\KwOut{The vector of Whitney numbers $b(\mathcal A)$}
	\WhitneyNumbers(){$(\mathcal A)$}{
		\tcp{compute the stabilizers of $G$}
		\nl compute 	
	$\{G_i\}_{i=0}^n$ where $G_i = \{i+1,\ldots,n\}^{G}$ and $G_n=G$\;
		\tcp{initialize orbit-node dictionaries} 
		\nl initialize $\{T_i\}_{i=0}^n$ and set $T_0=\{G_0\cdot \emptyset\Rightarrow ((\emptyset,\A),1)\}$\;
		\nl  \For{$k=1,\ldots,n$}{
			\nl set $T_k = \textbf{NextGeneration}(\mathcal A,G_k,T_{k-1})$
		}  
		\nl initialize $b=(0,0,\ldots,0)$\;
	\nl \For{$(B_I,\omega(B_I)) \in T_n$}{
	\nl increment the entry $b_{|I|}$ by $\omega(B_I)$\;
	}
	\nl \Return{b}
	}
	\caption{Whitney numbers using symmetry \label{algo:DRwSymmetry}}
\end{algorithm}

 Moving from depth $k-1$ to $k$ is performed by Algorithm \ref{algo:sproutleaves}. 

\begin{algorithm}[H]
	\SetKwIF{If}{ElseIf}{Else}{if}{then}{elif}{else}{}%
	\DontPrintSemicolon
	\SetKwProg{NextGeneration}{NextGeneration}{}{}
	\LinesNotNumbered
	\KwIn{A hyperplane arrangement $\mathcal A=\{H_1,\ldots,H_n\}$ in $\K^d$
	}
	\myinput{A subgroup $G_k \leq \{k+1,\ldots,n\}^{\Aut(\A)}$}
	\myinput{An orbit-node dictionary $T_{k-1}$
	}
	\KwOut{An orbit-node dictionary $T_k$}
	\NextGeneration(){$(\mathcal A,G_k,T_{k-1})$}{ \nl set $J=\{k+1,\ldots,n\}$ \;
		\nl  \For{$(B_I,\omega(B_I)) \in \texttt{values}(T_{k-1})$}{
			\nl\label{line:ConditionSymm}\If{$H_{k}\cap L_I$ is a unique hyperplane amongst $(H_{j} \cap L_I)_{j=k}^n$}{
			\tcp{produce the restriction as the right child}
			\nl set $I'=I \cup \{k\}$\;
			\nl\label{line:orbit1}compute the orbit $\mathcal O=G_{k}\cdot I'$\;
			\nl \uIf{$\mathcal O \in \texttt{keys}(T_k)$}{
			\nl increment the multiplicity of $T_k[\mathcal O]$ by $\omega(B_I)$\;}
			\nl \Else{
		\nl	$T_k[\mathcal O]=((H_{I'},H_{J}),\omega(B_I))$\;
			}
			}
			\tcp{produce the deletion as the left child}
		\nl\label{line:orbit2}compute the orbit $\mathcal O = G_k\cdot I$\;
			\nl \uIf{$\mathcal O \in \texttt{keys}(T_k)$}{
			\nl increment the multiplicity of $T_k(\mathcal O)$ by $\omega(B_I)$\;}
			\nl \Else{
		\nl	$T_k[\mathcal O]=((H_I,H_J),\omega(B_I))$\;
			
}
		} 
\Return{$T_{k}$}\;
	}
	\caption{NextGeneration \label{algo:sproutleaves}}
\end{algorithm}

\begin{example}
The structure underlying Algorithm \ref{algo:DRwSymmetry} applied to the arrangement in Example \ref{ex:arr} is shown in Figure \ref{fig:SymmTree}. It is no longer a tree but may be obtained from the tree in Figure \ref{fig:DRTreeExtended} by identifying nodes under the stabilizers of $\Aut(\A)$. Each identification accumulates multiplicity in the node and that multiplicity is passed down to its children.
\begin{figure}[!htpb]
\includegraphics[scale=0.25]{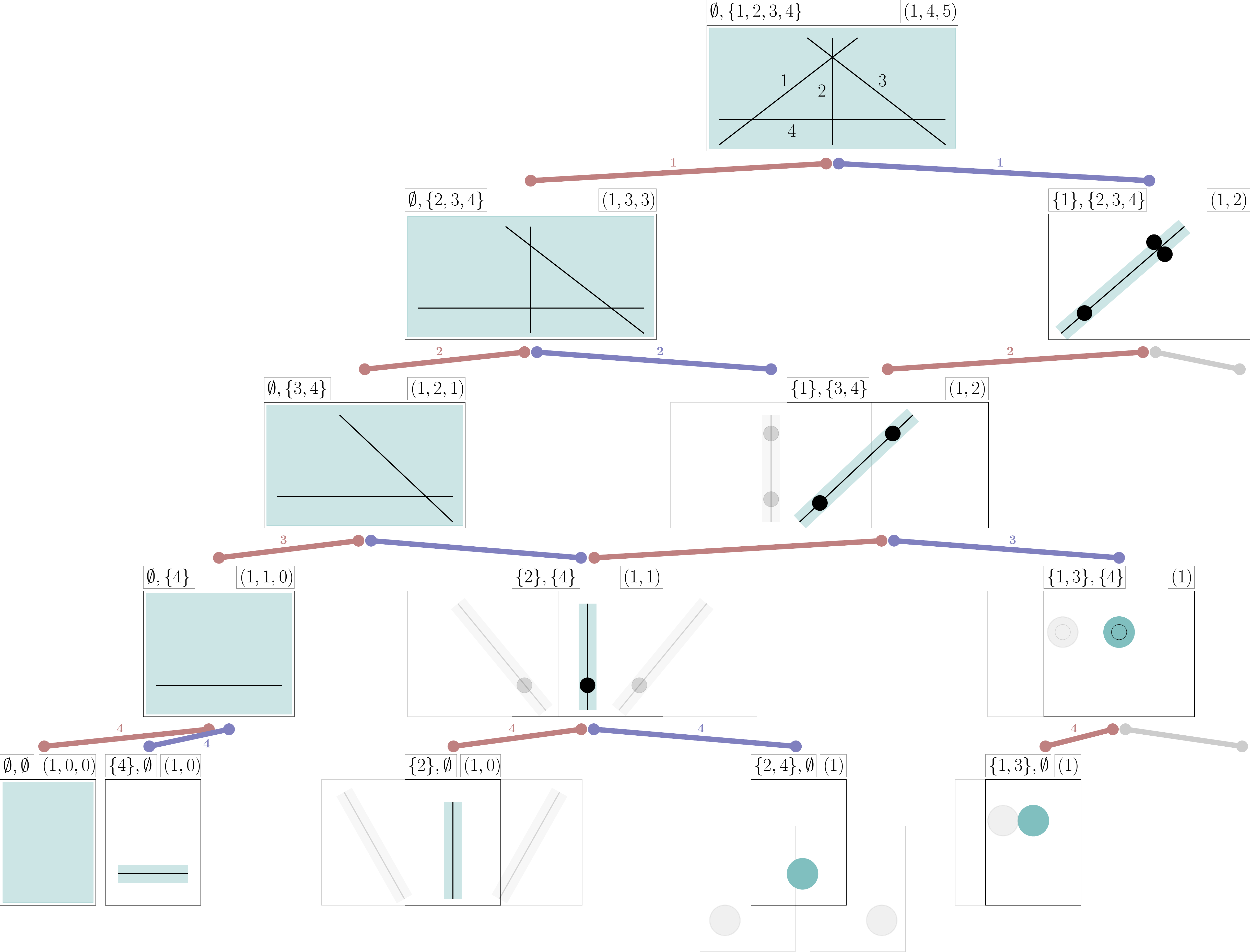}
\caption{The algorithmic structure underlying Algorithm \ref{algo:DRwSymmetry}. Starting at the top node, each call of Algorithm \ref{algo:sproutleaves} produces the next depth of this graph.}
\label{fig:SymmTree}
\end{figure}

\end{example}

\subsection{Representing orbits}
The computations of orbits in Line \ref{line:orbit1} and Line \ref{line:orbit2} require elaboration; specifically in regards to representing an orbit $G\cdot I$ on a computer. One option is to use a canonical element of $G\cdot I$, which can be computed using the \texttt{MinimalImage} or \texttt{CanonicalImage} functions from \texttt{GAP}~\cite{JJPW19,images}. An alternative approach is to provide any function $\varphi: 2^{[n]} \to S$ taking values in an arbitrary set $S$ such that $\varphi(I)=\varphi(J)$ only if $G\cdot I=G\cdot J$. Equivalently, $\varphi$ is any factor of the projection $\pi:2^{[n]} \to  2^{[n]}/G$ as a map of sets where $2^{[n]}/G$ is the set of orbits. In this case, the value of $\varphi(I)$ may be used to represent the orbit $G\cdot I$ as a key in the orbit-node dictionaries. While this approach may fail to identify all nodes in the same orbit, nodes in distinct orbits are never identified and so the algorithm remains correct. The benefit is that it may be significantly more efficient to evaluate $\varphi$ than it is to compute minimal or canonical images. 

Our default option for identifying orbits is called \texttt{pseudo\_{\hspace{0.07em}}minimal\_image}. Given a subset $I\subseteq [n]$ and a collection of elements $g_1,\ldots,g_m~\in~G~\leq~\mathfrak{S_n}$, this function sequentially computes $g_iI$ and recursively calls itself on $g_iI$ whenever $g_iI<I$ lexicographically. If no such $g_i$ produces a smaller subset, $I$ itself is returned. Options are implemented for choosing $m$ to be a proportion of $|G|$ subject to maximum and minimum values. For our computations, we take $m=n$ random elements of $G$. Although this greedy procedure does not make all possible identifications in the algorithm, we have found that it is quicker than \texttt{MinimalImage} to evaluate and produces a comparably small algorithmic structure. 

\begin{example}
We compare the effect of three choices of identifications in Algorithm \ref{algo:DRwSymmetry} (either \texttt{pseudo\_{\hspace{0.07em}}minimal\_image}, the \texttt{MinimalImage} function in \texttt{GAP}, or no identifications at all) on the resonance arrangement $\mathcal{R}_7$ (see Definition \ref{def:resonance}) consisting of $127$ hyperplanes in $\mathbb{R}^7$. We compare the number of leaves of the algorithm at some depth, as well as the time per depth of the algorithm and display the results in Figure \ref{fig:LeavesAndTimings}. 

As depicted, the cost (in number of leaves) of using \texttt{pseudo\_{\hspace{0.07em}}minimal\_image} compared to \texttt{MinimalImage} is negligible, while the benefits in terms of speed are significant. Similarly, while the timing of our algorithm with \texttt{MinimalImage} is comparable to the timing without any identifications (Algorithm \ref{algo:extendedDR}), the memory usage is significantly reduced as conveyed by the number of leaves (a reasonable proxy for memory usage). This difference becomes even more dramatic for larger arrangements.

\begin{figure}[!htpb]
	\begin{subfigure}[b]{0.51\linewidth}
		\centering
		\includegraphics[scale=.3]{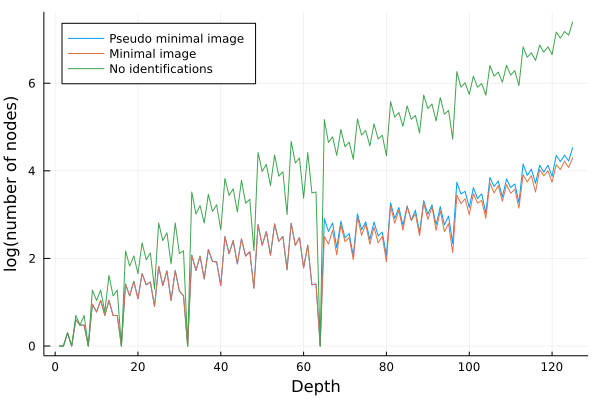}
		\caption{Leaves at depth.}
	\end{subfigure}
	\begin{subfigure}[b]{0.51\linewidth}
		\centering
		\includegraphics[scale=.3]{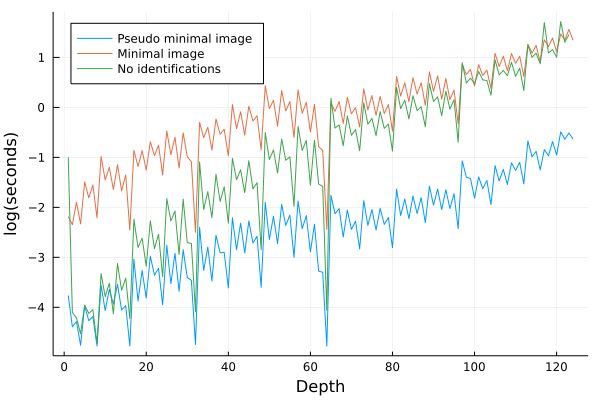}
		\caption{Time per depth.}
	\end{subfigure}
	\caption{The leaves per depth and time per depth of Algorithm \ref{algo:DRwSymmetry} on the arrangement $\mathcal{R}_7$ using \texttt{pseudo\_{\hspace{0.07em}}minimal\_image}, \texttt{MinimalImage}, and no identifications.}
\label{fig:LeavesAndTimings}
\end{figure}

\end{example}

\subsection{Accumulating the Whitney numbers and skipping levels}
Much of the computational burden occurs in Line \ref{line:ConditionSymm} of Algorithm \ref{algo:sproutleaves} and involves projecting the normal vectors of the hyperplanes in $\A$ along those hyperplanes which have been restricted. When implementing Algorithm \ref{algo:sproutleaves}, one may choose whether to save such computations at the cost of memory, or to perform redundant computations throughout the algorithm. We found that, for our benchmark examples, recomputation held the most benefit.

Nonetheless, from the linear algebra involved in the evaluation of Line \ref{line:ConditionSymm}, one can read off $\mydef{j_{\min}}$, the smallest $j \in J$ for which this uniqueness condition is true. Hence, one may immediately place the left child of the corresponding node in level $j_{\min}$ rather than $k$ to avoid redundancy in Line \ref{uniquenessCond} later on. This comes at the cost of missing some identifications between the layers $k$ and $j_{\min}$. 

Another implementation choice we made was to keep a running count of the Whitney numbers of the arrangement throughout the algorithm. Whenever $j_{\min}=n$ while computing the children of $(B_I,\omega(B_I))$, we increment $b_{|I|}$ by $\omega(B_I)$ and delete the node altogether since no other deletions or restrictions are possible. Similarly, if $\A$ is a hyperplane arrangement where each hyperplane contains the origin, $b_{|I|}$ and $b_{|I|+1}$ are incremented by $\omega(B_I)$ whenever $j_{\min}=n-1$ by a similar reasoning. In this way, we can free memory occupied by nodes throughout the algorithm. 

\subsection{Relation to \texttt{OSCAR}}

The new computer algebra system \texttt{OSCAR} in \texttt{julia} combines the existing systems \texttt{GAP} \cite{GAP}, \texttt{Singular} \cite{Singular}, \texttt{Polymake} \cite{Polymake,Polymakejl}, and \texttt{Antic~(Hecke, Nemo)}~\cite{OSCAR}.
Our software is written in \texttt{julia} and builds heavily on these cornerstones.
Specifically, we use the number theory components \texttt{Nemo}~\cite{nemo} and \texttt{Hecke} to work with arrangements defined over algebraic field extensions of $\Q$.
For example the separability arrangement of the vertices of the $600$-cell is defined over $\Q(\sqrt{5})$.

Secondly, we use \texttt{GAP}~\cite{GAP} for group theoretic computations in Algorithm~\ref{algo:sproutleaves}.
Concretely, we compute stabilizers and minimal images using the \texttt{GAP} packages \texttt{ferret}~\cite{ferret} and \texttt{images}~\cite{images}, respectively.

\subsection{Functionality of \texttt{CountingChambers.jl}}

The \texttt{julia} package titled \texttt{CountingChambers.jl} contains our implementation and is available at 
\begin{center}
 \href{https://mathrepo.mis.mpg.de/CountingChambers/index.html}{https://mathrepo.mis.mpg.de/CountingChambers}
 \end{center}
The following code snippet shows some standard functions of our package applied to the arrangement introduced in~\Cref{ex:arr}.
A collection of hyperplanes defined by the equations $\ell_i(x_1,\dots,x_d)=c_i$ for $1\le i \le n$ is encoded by a $d\times n$ matrix $A$ having the coefficients of $\ell_i$ as columns and a vector~$c$.

 {\scriptsize
\begin{verbatim}
           julia> A = [-1 1 1 0; 1 0 1 1];
           julia> c = [1, 0, 1, 0];
           julia> whitney_numbers(A; ConstantTerms=c)
           3-element Vector{Int64}:
           1 4 5
           julia> characteristic_polynomial(A; ConstantTerms=c)
           t^2 - 4*t + 5
           julia> number_of_chambers(A; ConstantTerms=c)
           10
\end{verbatim}
}
Note that the automorphism group of this arrangement is $\mathfrak{S}_3\hookrightarrow \mathfrak{S_4}$ consisting of permutations of the first three hyperplanes.
This group can be passed to our algorithm via a list of generators in one-line notation:
{\scriptsize
\begin{verbatim}
           julia> G = [[2,3,1,4],[2,1,3,4]];
           julia> whitney_numbers(A; ConstantTerms=c, SymmetryGroup=G)
           3-element Vector{Int64}:
           1 4 5
\end{verbatim}
}

As it is easy to run \texttt{julia} on multiple threads, we also implemented our algorithm to take advantage of this.
By starting \texttt{julia} via the command \texttt{julia --threads NUM\_THREADS} and passing the optional parameter \texttt{multi\_threaded=true} to our methods, the \textbf{for} loop in Algorithm \ref{algo:sproutleaves} is executed in parallel.  Table \ref{tab:parallel} shows how the multithreading scales.

\section{Examples and integer sequences}
\label{sec:Examples}

We apply our algorithm to a number of examples. Many of these arise from the following construction of separability arrangements.

\subsection{Separability arrangements}\label{sec:separability}
Fix a finite set $\mathbf{V} \subset \R^d$.
 We associate to every $v \in {\mathbf{V}}$ the hyperplane $\mydef{H_v} \subset (\R^{d+1})^*$ comprised of linear forms which vanish on $(1,v)$. Equivalently, $H_v$ represents the affine hyperplanes in $\R^d$ which contain $v$. We call the arrangement 
$\mydef{\mathcal H_{\mathbf{V}}} :=\{H_v \mid v \in \mathbf{V}\}$
the \mydef{separability arrangement} of $\mathbf V$. We point out that by increasing the dimension $d$ by one, this construction is distinct from the one which defines \emph{real reflection arrangements} from root systems. In particular, translating $\mathbf{V}$ does not change the combinatorics of $\mathcal H_{\mathbf{V}}$.

A hyperplane $H_v$ partitions the points in $(\R^{d+1})^*\backslash H_v$ into the sets $\mydef{H^+_{v}}$ of linear forms which are positive on $v$ and $\mydef{H^-_v}$ which are negative on $v$. Consequently, all affine hyperplanes corresponding to points in a chamber of $\mathcal H_{\mathbf{V}}$ are positive on some subset $V_1 \subset \mathbf{V}$ and negative on its complement $V_2=\mathbf{V}\backslash V_1$. Such a  partition $V_1 \sqcup V_2 = \mathbf{V}$ is called \mydef{linearly separable}. Hence, chambers of $\mathcal H_{\mathbf{V}}$ are in bijection with linearly separable partitions of $\mathbf{V}$, motivating the terminology for $\mathcal H_{\mathbf{V}}$. This point of view, which connects linear separability and hyperplane arrangements, appears in \cite[Section 2]{Baldi}.

One purpose for introducing separability arrangements is that it immediately provides us with a zoo of arrangements admitting considerable symmetry; for example, those $\mathbf{V}$ which are the vertices of regular polytopes.

\subsection{The threshold arrangement}
The following arrangement appears in the study of neural networks \cite{Montufar2,Montufar1,Zunic} and algebraic statistics \cite{Cueto}. 
\begin{definition}\label{def:threshold}
The \mydef{threshold arrangement}\footnote{The arrangement $\{x_i+x_j\}_{1 \le i < j \le d}$ in $\R^d$ is also referred to as a threshold arrangement in the literature. We discuss the arrangement $\mathcal T_d$ only as in Definition~\ref{def:threshold}.}, $\mydef{\mathcal T_d}$ is the separability arrangement associated to the vertices of the hypercube $[0,1]^{d}$. That is,\vspace{-0.3em}
\[
\mydef{\T_d} \coloneqq \{ \{x_0 + c_1x_1 + \dots + c_dx_d=0 \} \mbox{ with } c_i\in \{0,1\} \mbox{ for all }c_i\}.
\]
\end{definition}\vspace{-0.3em}
As a consequence of the definition of $\mathcal T_d$, the linear automorphisms of the hypercube $[0,1]^d$, namely the hyperoctahedral group of order $d!2^d$, is a subgroup of $\Aut(\mathcal T_d)$. The true size of $\Aut(\mathcal T_d)$ is $(d+1)!2^d$. 

We computed the Whitney numbers of $\T_d$ for $1\le d \le 8$, and thus their number of chambers. The results are collected in Table~\ref{tab:tn} and the timings appear in Table \ref{tab:OurTimings}.
The values of $|\ch(\T_d)|$ for $1\le d \le 9$ are listed in entry \href{https://oeis.org/A000609}{A000609} of the Online-Encyclopedia of Integer Sequences (OEIS), whereas the Whitney numbers of $\T_d$, to the best of our knowledge, have not been published before.
Zuev showed that asymptotically $ |\ch(\T_d)| \sim 2^{d^2}$~\cite{Zue92}. 

\begin{remark}
	Using similar proof techniques as in~\cite{Kue20} one can show that the values of $b_i(\T_d)$ for $1 \le d \le 2^i$ determine a formula for $b_i(\T_d)$ for all $d$.
	Applying this to the case of $b_2(\T_d)$ and $b_3(\T_d)$ and using the results in Table~\ref{tab:tn} we obtain
	$
	b_2(\T_d) =\frac{1}{2}(4^d-2^d)$ and $b_3(\T_d) = \frac{1}{24}\left(  4\cdot 8^d - 3\cdot 6^d -6\cdot 4^d +5\cdot 2^d \right).
$
	For $i \geq 4$ this technique requires knowledge of $b_i(\T_d)$ for at least $1 \leq d \leq 16$.
\end{remark}

\subsection{The resonance arrangement}
The next arrangement we consider appears as a restriction of the threshold arrangement. 
\begin{definition}
\label{def:resonance}
The \mydef{resonance arrangement}  is the restriction of $\mathcal T_d$ to the hyperplane $H_{(0,\ldots,0)}$. Equivalently, for $d\geq 1$ the resonance arrangement is \vspace{-0.4em}
\[
\mydef{\mathcal R_d} : = \{\{c_1x_1+c_2x_2+\cdots+c_dx_d = 0 \} \text{ with } c_i \in \{0,1\} \text{ and not all } c_i \text{ are zero}\}.
\]
\end{definition}\vspace{-0.4em}

The chambers of the resonance arrangements are in bijection with generalized retarded functions in quantum field theory~\cite{Eva95}.
An overview of the applications of the resonance arrangement is given in~\cite[Section 1]{Kue20}.
 A formula for their number of chambers remains elusive, let alone one for their Whitney numbers. Nonetheless, partial formulas and bounds exist~\cite{BTDWW12,GMP21,Kue20,Zue92}. 

The numbers of chambers of the resonance arrangements are listed in the sequence \href{https://oeis.org/A034997}{A034997} in the OEIS up to $d=9$.
The Whitney numbers are published in~\cite{KTT11} up to $d=7$.
Our software was able to determine the Whitney numbers of $\cR_8$ and $\cR_9$ confirming the concurrent computations in~\cite{CS21}.
The computation for $\cR_9$ took ten days, running multithreaded on $42$ \texttt{Intel Xeon E7-8867 v3} CPUs. 
All Whitney numbers of $\cR_d$ up to $d=9$ are given in~\Cref{tab:rn} and the timings are listed in Table \ref{tab:OurTimings}.

\subsection{Separability arrangements of the cross-polytopes}

The \mydef{cross-polytope} of dimension $d$ is the polytope with the $2d$ vertices $\{\pm e_i\}_{i=1}^d$.
Its symmetry group is the hyperoctahedral group of order $d!2^d$.
We define the arrangement~$\mydef{\cC_d}$ in $\R^{d+1}$ to be the separability arrangement of its vertices.

Our computations show that $
|\ch(\cC_d)|=2\cdot 3^d -2^d$ for $d \leq 20$, suggesting that $|\ch(\cC_d)|$ agrees with this sequence (\href{https://oeis.org/A027649}{A027649} in the OEIS). 
This can indeed be proven by applying Athanasiadis' finite field method~\cite{Ath96} and seems to be a new result obtained through experiments with our algorithm.

\subsection{Separability arrangements of permutohedra}

The \mydef{permutohedron} of dimension $d-1$ is the convex hull of the $d!$ points $\sigma(1,\dots,d)$ for all $\sigma\in\mathfrak S_{d}$.
The separability arrangements $\mydef{\cP_d}$ of these points in $\R^{d+1}$ consist of $d!$ hyperplanes.
We record their Whitney numbers in~\Cref{tab:pn} for $1\le d \le 6$.

\subsection{Separability arrangements of demicubes}

The $d$-\mydef{demicube} is the convex hull of those vertices of the hypercube $[0,1]^d$ which have an odd number of $1$'s.
For instance, the $3$-demicube is a regular tetrahedron.
We denote by~$\mydef{\cD_d}$ the corresponding separability arrangement
consisting of $2^{d-1}$ hyperplanes in $\R^{d+1}$.
\Cref{tab:dn} contains the Whitney numbers of $\cD_d$ up to $d=9$.

\subsection{Separability arrangements of some regular polytopes}
In  \Cref{tab:reg}, we provide the Whitney numbers for the separability arrangements corresponding to the remaining two Platonic solids: the icosahedron and the dodecahedron. This table also contains the Whitney numbers of the separability arrangements of the vertices of the regular $24$-cell, $600$-cell, and  $120$-cell. 
Except for the $24$-cell, each of these computations uses irrational realizations.

\subsection{Discriminantal arrangements}
Given $n$ points in $\R^d$ in general position, the discriminantal arrangement $\mydef{\textrm{Disc}_{d,n}}$ is the hyperplane arrangement in $\R^d$ consisting of the ${n}\choose{d}$ hyperplanes spanned by $d$-subsets of such  points. This arrangement, originally called the ``geometry of circuits'' was introduced by Crapo \cite{Crapo2}.  We verify the Whitney numbers of $\textrm{Disc}_{4,n}$ for $5 \leq n \leq 16$ given in \cite[Section 4.4]{Koizumi}. From this data, we recover their formula for the characteristic polynomial of $\textrm{Disc}_{4,n}$ for all $n$. A deformation of this arrangement appears in physics \cite{cachazo2,cachazo1}
and we were able to confirm the	chamber	counts given in these papers.

\section{Timings}
\label{sec:Timings}
While other pieces of software for counting chambers of arrangements exist, they do not take advantage of symmetry and some compute significantly more data than our algorithm does. Consequently, our software outperforms them with respect to the calculation of Whitney numbers as shown below. 
\vspace{-10pt}
\begin{table}[h]
	\begin{footnotesize}
		\[\def\arraystretch{1.5}{
			\begin{array}{c| r r r r rr }			
			 \textrm{Software} & d=3 & 4 & 5 &  6 & 7 & 8\\
			\hline
			\hline
			\texttt{CountingChambers.jl} \tiny{\text{ w/ symm.}}  &0.0038 s & 0.011s& 0.035 s&  0.12s &2.89s & 19.8m \\
			\texttt{CountingChambers.jl} \tiny{\text{ w/o symm.}}  & 0.0002s & 0.0004s &  0.004s&  0.53s& 6.2m & * \\	
			\texttt{polymake}  & 0.3s& 4.31s & 3.9m & * & \\
			\texttt{sage} &  0.05s&  0.21s&  5.45s & 9.2m & * \\
						\texttt{GAP} & 0,006s &  0.035s & 1.09s & 1.9m & 12.87h & *\\ 
			\hline
			\end{array}}
		\]
	\end{footnotesize}
		\caption{Timings for computing the number of chambers  $|\ch(\cR_d)|$ of the resonance arrangement $\cR_d$ for $3\le d \le 8$ on a single thread (\texttt{Intel Core i7-8700}). An asterisk * indicates that the computation was terminated after a day.}
	\label{tab:CompareToOthers}
\end{table}
\vspace{-3em}

The implementation \cite{KP20} in \texttt{polymake} computes much more information than the Whitney numbers, namely a chamber decomposition of the arrangement. The \texttt{sage} implementation, on the other hand,  uses basic deletion and restriction as in Algorithm \ref{algo:extendedDR}. Similarly, the \texttt{GAP} package \texttt{alcove} \cite{alcove} computes the Tutte polynomial by simple deletion and restriction and then specializes this to the characteristic polynomial.

To illustrate the performance of our software on the arrangements from Section \ref{sec:Examples}, we collect our timings in Table \ref{tab:OurTimings}.
This table also shows the growth in complexity for computing the number of chambers of these~arrangements.
Based on our profiling, the main bottleneck in our implementation is the identifications of orbits. Thus, improving \texttt{pseudo\_{\hspace{0.07em}}minimal\_image} would be the most direct method for making our code faster.  \vspace{-1em}
\begin{table}[htpb]	
	\begin{scriptsize}		
	\[\def\arraystretch{1.5}{			
	\begin{array}{c| c | r r r r r r r}
		\A & |\Aut(\A)| & d=3& 4 & 5 & 6 & 7 & 8 & 9 \\
		\hline			\hline			\mathcal T_d & (d+1)!2^d& 0.005s  & 0.013s & 0.041s & 0.28s & 33.17s & 8.16h & \\			
		\mathcal R_d& (d+1)! &0.004 s & 0.011s& 0.035 s& 0.12s &2.89s & 19.8m & \sim 10d^+\\
		\mathcal C_{2d} & (2d)!2^{2d}& 0.015s & 0.039s & 0.085s& 0.183s & 0.42s & 1.158s & 4.50s\\
		\mathcal P_d & d! & 0.003s& 0.013s & 6.398s & \sim 8d^+ & &&\\			
		\mathcal D_d & (d)!2^{d-1}& 0.002 s&0.005 s& 0.018s & 0.049s &0.54s&1.9m& \sim 8d^+\\			
		\textrm{Disc}_{4,n} & n!&-&0.0003s &0.0047s &0.055s  & 0.71s &7.62s& 41.14s \\			
		\hline			\end{array}}		
		\]	
		\end{scriptsize}	\caption{Our timings on examples from Section~\ref{sec:Examples}. Computations ran on a single thread {(\texttt{Intel~Core~i7-8700})} except for $\mathcal{R}_9$
		which ran on $42$ threads (\texttt{Intel~Xeon E7-8867}).}	\label{tab:OurTimings}	\end{table}
		
		\vspace{1in}
		
\begin{table}[!htpb]
\begin{footnotesize}
		\[\def\arraystretch{1.5}{
			\begin{array}{c| r r r r r }			
			 \mathcal A& \#\text{threads}=1 & 2 & 4 &  8 & 12 \\
			\hline
			\hline
			\mathcal{R}_8  & 19.8m & 10.5m & 6.3m&  5.9m &  5.1m\\
\mathcal{T}_8  &  8.16h& 3.9h & 2.4h & 1.8h   &  1.6h \\
			\hline
			\end{array}}
		\]

\end{footnotesize}
		\caption{Comparison of the effect of number of threads on run times (\texttt{Intel Core i7-8700}).}
	\label{tab:parallel}
\end{table}

		\vspace{-1.5em}

\appendix 

\section{Tables of Whitney numbers}

\begin{table}[htpb]
	\begin{scriptsize}
		\[\def\arraystretch{1.5}{
			\begin{array}{c| r r r r r r r r}			
			d & 1 & 2 & 3 & 4 & 5 & 6 & 7 & 8  \\
			\hline
			\hline
			b_0(\T_d)				 & 1 & 1 & 1 & 1 & 1 & 1 & 1 & 1 \\
			b_1(\T_d)=|\T_d| & 2 & 4 & 8 & 16 & 32 & 64 & 128 & 256 \\
			b_2(\T_d) & 1 & 6& 28& 120& 496& 2016& 8128& 32640\\
			b_3(\T_d) &  & 3 & 44 & 460 & 4240 & 36848 & 310464 & 2569920 \\
			b_4(\T_d) &  &  &  23 & 820 & 19660 & 400400 & 7493808 & 133492800\\
			b_5(\T_d) &  &  &   &  465 & 43014 & 2453248 & 112965776 &  4626016752 \\
			b_6(\T_d) &  &  &   &  &  27129 & 7111650 &  987779688 & 103818315888 \\
			b_7(\T_d) &  &  &   &  &  &  5023907 & 4075759064 & 1382897843304 \\
			b_8(\T_d) &  &  &   &  &  &  & 3193753807 & 8676817935144 \\
			b_9(\T_d) &  &  &   &  &  &  &  & 7393243346241\\
			\hline
			|\ch(\T_d)| & 2 & 14 & 104 & 1882& 94572& 15028134& 8378070864& 17561539552946 
			\end{array}}
		\]
	\end{scriptsize}
	\caption{The values of $b_i(\T_d)$ and $|\ch(\T_d)|$ of the \mydef{threshold arrangement} for $1\le d \le 9$ and $0\le i \le d$.}
	\label{tab:tn}
\end{table}

\begin{table}[]
	\begin{scriptsize}
		\[\def\arraystretch{1.5}{
			\begin{array}{c| r r r r r r r r r}
			d & 1 & 2 & 3 & 4 & 5 & 6 & 7 & 8  &9\\
			\hline
			\hline
			b_0(\cR_d)				 & 1 & 1 & 1 & 1 & 1 & 1 & 1 & 1 & 1 \\
			b_1(\cR_d)=|\cR_d| & 1 & 3 & 7 & 15 & 31 		& 63 & 127 & 255 & 511 \\
			b_2(\cR_d) &  				 & 2 & 15& 80& 375		& 1652& 7035& 29360 & 120975\\
			b_3(\cR_d) &  				 &  & 	9   & 170 & 2130 & 22435 & 215439 & 1957200 & 17153460\\
			b_4(\cR_d) &  				 &  &      & 104 & 5270 & 159460 & 3831835 & 81029004 & 1582492380\\
			b_5(\cR_d) &  				&  &   &   			& 3485 & 510524 & 37769977 &  2076831708 &  96834110730\\
			b_6(\cR_d) &  				&  &   &  &   					& 371909 &  169824305 & 30623870732 &  3829831100340\\
			b_7(\cR_d) &   			&  &   &  &  &   									& 135677633 & 207507589302 & 89702833260450\\
			b_8(\cR_d) &  				&  &   &  &  &  &  														& 178881449368 & 973784079284874\\
			b_9(\cR_d) &  				&  &   &  &  &  &  														&  & 887815808473419\\
			\hline
			|\ch(\cR_d)| & 2 & 6 & 32 & 370& 11292& 1066044 & 347326352& 419172756930 & 1955230985997140
			\end{array}}
		\]
	\end{scriptsize}
	\caption{The values of $b_i(\cR_d)$ and $|\ch(\cR_d)|$ of the \mydef{resonance arrangement} for $1\le d \le 9$ and $0\le i \le d$.
	We submitted these Whitney numbers to the OEIS as the sequence \href{https://oeis.org/A344494}{A344494}.}
	\label{tab:rn}
\end{table}

\begin{table}[h]
	\begin{scriptsize}
		\[\def\arraystretch{1.5}{
			\begin{array}{c| r r r r r r r r}			
			d   								&2 & 3 & 4 & 5 & 6 & 7 & 8  & 9\\
			\hline
			\hline
			b_0(\cD_d)				     & 1 & 1 & 1    & 1       & 1 			& 1 			    & 1 & 1 \\
			b_1(\cD_d)=|\cD_d|     & 2 & 4 & 8   & 16     & 32 			& 64 			  & 128 & 256 \\
			b_2(\cD_d)  				 &  1 & 6 & 28 & 120   & 496		& 2016			& 8128& 32640\\
			b_3(\cD_d) 					&  	 0 & 4& 50 & 500   & 4480      & 38304 		  & 319200 & 2622400 \\
			b_4(\cD_d) 					&  	   &  1&  44 & 1160 & 24340   & 461496 		& 8283744 & 143504320\\
			b_5(\cD_d) 					&  		&  &   15&  1362 & 76364   & 3486448 	& 143595816 &  5483536464 \\
			b_6(\cD_d) 					&  		&  &   		&  597  & 120942 & 15440376  & 1615624080 & 145378334304 \\
			b_7(\cD_d) 					&       &  &   		&          & 64903   &  33803416 & 10878083096 & 2574289938400\\
			b_8(\cD_d) 					&  	    &  &   		&          &  			  &  21424343 & 35828091880	& 27816202212040\\
			b_9(\cD_d) 					&  	    &  &   		&          &  			  &  				 & 26430009593	& 146101801794362 \\
			b_{10}(\cD_d) 			   &  	    &  &   		&          &  			 &   				 & 							& 120719853808577\\
			\hline
			|\ch(\cD_d)| 				& 4  & 16& 146 & 3756& 291558& 74656464& 74904015666& 297363155783764
			\end{array}}
		\]
	\end{scriptsize}
	\caption{The values of $b_i(\cD_d)$ and $|\ch(\cD_d)|$ of the \mydef{demicube arrangement} for $2\le d \le 9$ and $0\le i \le d+1$.}
	\label{tab:dn}
\end{table}

\begin{table}[h]
	\begin{scriptsize}
		\[\def\arraystretch{1.5}{
			\begin{array}{c| r r r r r r}			
			d   								&1 & 2 & 3 & 4 & 5 & 6  \\
			\hline
			\hline
			b_0(\cP_d)				     & 1 & 1 & 1    & 1        & 1 			 	& 1 			     \\
			b_1(\cP_d)=|\cP_d|     & 1 & 2 & 6    & 24      & 120 	   		& 720			   \\
			b_2(\cP_d)  				 &    & 1 & 15  & 276    & 7140			& 258840		\\
			b_3(\cP_d) 					&  	  &    & 10  & 1423   & 246605      & 59577390		   \\
			b_4(\cP_d) 					&  	   &    &     & 1170  & 4290610   & 9271534305	\\
			b_5(\cP_d) 					&  		&  &       &   			& 4051026   & 834595018036 	 \\
			b_6(\cP_d) 					&  		&  &   		&          &  					& 825382803000  \\
			\hline
			|\ch(\cP_d)| 				& 2  & 4& 32 & 2894& 8595502& 1669309192292
			\end{array}}
		\]
	\end{scriptsize}
	\caption{The values of $b_i(\cP_d)$ and $|\ch(\cP_d)|$ of the \mydef{permutohedron arrangement} for $1\le d \le 6$ and $0\le i \le d$.}
	\label{tab:pn}
\end{table}

\begin{table}[h]
	\begin{scriptsize}
		\[\def\arraystretch{1.5}{
			\begin{array}{c| r r |r r r r r r}			
			\mbox{polytope} 	&\mbox{Icosahedron} &\mbox{Dodecahedron} 	&24\mbox{-cell} & 600\mbox{-cell} & 120\mbox{-cell} \\
			\hline
			\hline
			b_0(\A)		  &1 & 1		     & 1 & 1 				& 1    			  \\
			b_1(\A)=|\A|    & 12& 20   & 24 & 120 			& 600    			   \\
			b_2(\A)  	  & 66& 166			 &  276  & 7140 			& 179700 	\\
			b_3(\A) 	  & 157&577 				&  	 1630 &   225782 	& 31972550		   \\
			b_4(\A) 	  & 102& 430				&  	4308   &   3118740 &    2979870540	\\
			b_5(\A) 	  & - & -  				&  	2931	&  2899979 &     2948077091	 \\
			\hline
			|\ch(\A)| 		  &338 & 1194		& 9170  & 6251762 & 5960100482& &
			\end{array}}
		\]
	\end{scriptsize}
	\caption{The values of $b_i(\A)$ and $|\ch(\A)|$ of the \mydef{icosahedral and dodecahedral arrangements} as well as \mydef{arrangements stemming from regular $4$-polytopes} for  and $0\le i \le 5$.}
	\label{tab:reg}
\end{table}

 \clearpage

\bibliographystyle{abbrv}
\bibliography{arrangements.bib}

\end{document}